\numberwithin{equation}{section}
\newcommand{\ga}{\gamma}
\newcommand{\Ga}{\Gamma}
\newcommand{\la}{\lambda}
\newcommand{\fs}{\mathfrak{S}}
\newcommand{\bN}{\mathbb{N}}
\newcommand{\bQ}{\mathbb{Q}}
\newcommand{\si}{\sigma}
\newcommand{\mb}[1]{\mbox{#1}}
\newcommand{\lb}{\left(}
\newcommand{\rb}{\right)}
\newcommand{\beq}{\begin{equation}}
\newcommand{\eeq}{\end{equation}}
\theoremstyle{plain}
\newtheorem{theorem}{Theorem}[section]
\newtheorem{lem}[theorem]{Lemma}
\newtheorem{prop}[theorem]{Proposition}
\theoremstyle{remark}
\newtheorem{rem}[theorem]{Remark}
\theoremstyle{definition}
\newtheorem{exam}[theorem]{Example}
\begin{document}

\title[The distance spectra of the derangement graphs]
{The distance spectra of the derangement graphs}

\author[Li]{Yunnan Li}
\address{School of Mathematics and Information Science, Guangzhou University, Waihuan Road West 230, Guangzhou 510006, China}
\email{ynli@gzhu.edu.cn}
\author[Lin]{Huiqiu Lin$^\star$}
\address{Department of Mathematics, East China University of Science and Technology, Shanghai 200237, China}
\email{huiqiulin@126.com}

\thanks{$^\star$H. Lin, Corresponding Author.}
\subjclass[2010]{05A05, \ 05C50 }

\begin{abstract}
In this paper, we consider the distance spectra of the derangement graphs.
First we give a constructive proof that the connected derangement graphs are of diameter 2.
Then we obtain their distance spectra. In particular, we determine all their extremal distance eigenvalues.

\medskip
\noindent\textit{Keywords:} derangement graph, permutation, distance spectrum
\end{abstract}

\maketitle
\section{introduction}
The derangement graphs $\Ga_n$ are a family of normal Cayley graphs associated with symmetric groups $\fs_n$ and their derangement sets. Recently such graphs are generalized to those related to any permutation groups and applied to the research of intersecting families of permutation groups; see \cite{CK,LM,MST}. One important algebraic invariant of a graph is its adjacency spectrum, namely the eigenvalues of its adjacency matrix. The adjacency spectra of $\Ga_n$ are deeply related to some interesting combinatorial objects, such as the factorial Schur functions \cite{CL} and shifted Schur functions \cite{OO}, etc. The first recurrence formula to calculate adjacency spectra of $\Ga_n$ is due to Renteln in \cite{Re}. Later such adjacency spectra are more subtly studied by Ku, Wales and Wong in \cite{KW,KW1}. Besides, the automorphism groups of derangement graphs are determined in \cite{DZ1}.

In this paper we mainly discuss the distance spectra of the derangement graphs. That is another significant invariant to reflect the properties of graphs. First we constructively prove that the connected derangement graphs are of diameter 2, by decomposing any non-derangement permutation into a product of two derangements. One can also obtain such fact by the intersecting properties of permutations given in \cite{CK}. As a result, we can successively derive the distance spectra of $\Ga_n$ based on the preceding work about their adjacency spectra.
In particular, the smallest adjacency eigenvalue of $\Ga_n$ found in \cite{Re} and the second largest adjacency one given in \cite{DZ} correspond to the second largest distance eigenvalue and the smallest distance one of $\Ga_n$ respectively. Here we further determine the second smallest distance eigenvalue and the third largest one of  $\Ga_n$.

\section{Background}
For any $n\in\bN$, let $\fs_n$ be the symmetric group. The \textit{derangement graph} $\Ga_n$ of $\fs_n$ is the normal Cayley graph $\Ga(\fs_n,D_n)$ with vertices $V(\Ga_n)=\fs_n$ and edges $E(\Ga_n)=\{(w,sw):w\in\fs_n,s\in D_n\}$, where $D_n=\{w\in\fs_n:w(i)\neq i,\,i=1,\dots,n\}$. By abuse of notation, we also use $D_n$ to denote its cardinality. Permutations in $D_n$ are called \textit{derangements}. By the inclusion-exclusion principle, we know that
\beq\label{der}
D_n=\sum_{k=0}^n(-1)^k{n \choose
k}(n-k)!=n!\sum_{k=0}^n(-1)^k\dfrac{1}{k!}
=\begin{cases}
0&n=1,\\
1&n=0,2,\\
\left\{\dfrac{n!}{e}\right\}&n\geq3,
\end{cases}\eeq
where $\{k\}$ denotes the nearest integer to $k$; see also \cite[Lemma 4.1]{Re}. Approximately,
\beq\label{der1}
\left|D_n-\dfrac{n!}{e}\right|=\left|n!\sum_{k=n+1}^\infty(-1)^k\dfrac{1}{k!}\right|
<\dfrac{1}{n+1}.
\eeq
Note also the following useful induction identities.
\beq\label{ide}
\begin{array}{l}
D_n=nD_{n-1}+(-1)^n,\\
D_n=(n-1)(D_{n-1}+D_{n-2}).
\end{array}
\eeq

For any graph $\Ga$ and $u,v\in V(\Ga)$, let the \textit{distance} $d_\Ga(u,v)$ be the length of a shortest path between $u$ and $v$ in $\Ga$. The \textit{distance matrix} of $\Ga$ is defined by $d_\Ga:=(d_\Ga(u,v))_{u,v\in V(\Ga)}$, as a symmetric, nonnegative definite matrix. The \textit{distance spectrum} and the \textit{distance polynomial} $P_\Ga(q)$ of $\Ga$ are defined by the spectrum and the characteristic polynomial of $d_\Ga$ respectively.

By definition, $d_{\Ga_n}(u,v)$ is the shortest length of expressions of $vu^{-1}$ as a product of derangements for any $u,v\in\fs_n$. Hence, we define a length function $\ell_D$ on $\fs_n$ as follows. For any $w\in\fs_n$, let
\[\ell_D(w)=\mbox{min}\{r\in\bN :
w=u_1\cdots u_r,\,u_1,\dots,u_r\in D_n\}.\]
Then we compute $d_{\Ga_n}(u,v)=\ell_D(vu^{-1})$ below.

\begin{prop}\label{dis}
For any $u, v\in\fs_n\,(n>3)$, we have
\[d_{\Ga_n}(u,v)=\begin{cases}
1&vu^{-1}\in D_n,\\
2&\mbox{otherwise}.
\end{cases}\]
Equivalently, the diameter of $\Ga_n$ is two when $n>3$.
\end{prop}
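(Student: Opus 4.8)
The plan is to strip the geometry away and reduce everything to the length function $\ell_D$. The setup already records $d_{\Ga_n}(u,v)=\ell_D(vu^{-1})$, so I set $w=vu^{-1}$; for the off-diagonal entries we have $u\neq v$, hence $w\neq\mi$. If $w\in D_n$ then $\ell_D(w)=1$ by definition, which is the first case. Thus the entire content lies in the second case: for every $w\in\fs_n$ that is \emph{neither} $\mi$ \emph{nor} a derangement I must show $\ell_D(w)=2$. The lower bound $\ell_D(w)\ge 2$ is immediate, since $\ell_D(w)=0$ forces $w=\mi$ and $\ell_D(w)=1$ forces $w\in D_n$. So the crux is the upper bound $\ell_D(w)\le 2$, i.e. that such a $w$ factors as a product of two derangements.

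Next I would reformulate this factorization as a matching problem. Seeking $u_1,u_2\in D_n$ with $w=u_1u_2$ is the same as finding one permutation $\si:=u_1$ with $\si\in D_n$ and $\si^{-1}w\in D_n$. Unwinding the two conditions, $\si\in D_n$ means $\si(i)\neq i$, while $(\si^{-1}w)(i)\neq i$ means $\si(i)\neq w(i)$, so the task is exactly to produce a single $\si$ with
\[
\si(i)\neq i \quad\text{and}\quad \si(i)\neq w(i)\qquad(1\le i\le n).
\]
(One may first observe that $D_n$ is a union of conjugacy classes, so $\ell_D$ is a class function and it would suffice to treat one representative per cycle type having at least one fixed point and at least one longer cycle; this is handy for an explicit construction but is not needed below.)

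I would then encode the requirement as a perfect matching in the bipartite graph $B$ on $\{1,\dots,n\}\sqcup\{1,\dots,n\}$ joining $i$ to $j$ whenever $j\notin\{i,w(i)\}$. Since $\mi$ and $w$ are permutations, each left vertex and each right vertex loses at most two partners, so $B$ has minimum degree at least $n-2$. For $n\ge 4$ I verify Hall's condition by a short case split on $|S|$: when $|S|\le 2$ the bound $n-2\ge 2$ already gives $|N(S)|\ge 2\ge|S|$ (and $|N(S)|\ge n-2\ge 1$ when $|S|=1$); when $|S|\ge 3$, a right vertex $j\notin N(S)$ would force every $i\in S$ into $\{j,w^{-1}(j)\}$, contradicting $|S|\ge 3$, so $N(S)$ is all of the right side. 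Hence $B$ has a perfect matching, giving the desired $\si$ and the factorization $w=\si\,(\si^{-1}w)$.

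The hard part — and the only place the hypothesis $n>3$ is used — is precisely the small-$|S|$ regime of Hall's condition: for $n=3$ the minimum degree drops to $1$ and the condition genuinely fails (e.g.\ $w=(1\,2)$ forbids both rows $1$ and $2$ from every column except $3$, so $N(\{1,2\})=\{3\}$), which is why the diameter-$2$ statement is false for $n\le 3$. Finally, to conclude that the diameter equals $2$ rather than merely being at most $2$, I note that the upper bound just proved shows every pair of vertices is at distance $\le 2$, so $\Ga_n$ is connected, and distance $2$ is actually attained because a transposition lies outside $D_n\cup\{\mi\}$ when $n>3$, furnishing a pair of vertices at distance exactly $2$.
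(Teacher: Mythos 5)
Your proposal is correct, and it takes a genuinely different route from the paper. The paper proves the key upper bound $\ell_D(w)\le 2$ constructively: it notes $\ell_D$ is a class function (as $D_n$ is closed under conjugation) and then inducts on the number $f$ of fixed points of $w$, handling $f=1$ by a delicate explicit construction (a tuple $(k_1,\dots,k_{n-2})$ defined by congruences modulo $n-2$ splitting $(1)(23\cdots n)$ into two derangements), $f=2$ by further cycle identities, and $f>2$ by induction. You instead observe that $w=u_1u_2$ with $u_1,u_2\in D_n$ amounts to finding a single permutation $\si$ with $\si(i)\notin\{i,w(i)\}$ for all $i$ --- a system of distinct representatives for the sets $\{1,\dots,n\}\setminus\{i,w(i)\}$ --- and you obtain it from Hall's theorem, whose hypothesis you verify correctly (minimum degree $n-2\ge 2$ disposes of $|S|\le 2$, while for $|S|\ge 3$ a right vertex missed by $N(S)$ would force $S\subseteq\{j,w^{-1}(j)\}$, a contradiction). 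Your argument is shorter, treats all cycle types uniformly, proves slightly more (every permutation, including the identity and derangements themselves, is a product of two derangements once $n\ge 4$), and isolates exactly where $n>3$ enters; what it gives up is the explicit factorization, which the paper's Remark advertises as the very point of its proof (the bare fact also being derivable from the intersecting-family results of Cameron--Ku). Your bookkeeping at the ends --- the lower bound $\ell_D(w)\ge 2$ for $w\notin D_n\cup\{\mbox{id}\}$, and attainment of distance $2$ by a transposition so that the diameter equals $2$ --- is also sound.
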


\begin{proof}
First note that the length function $\ell_D$ is a conjugacy class function on $\fs_n$, since $D_n$ is stable under
conjugation.
%
Now we prove the result by induction on the number $f$ of fixed points of a permutation $w$. First we need the following identities about permutations:
\beq\label{cycle}
\begin{array}{l}
(i)(j)=(ij)\cdot(ij),\\
(i)(j)(kl)=(ik)(jl)\cdot(ikjl),
\end{array}
\eeq
 and
 \beq\label{cycle1}
\begin{array}{l}
(ij)(kl)=(ik)(jl)\cdot(il)(jk),\\
(l_1l_2\cdots l_r)=(l_1l_2\cdots l_r)^2\cdot(l_rl_{r-1}\cdots l_1),
\end{array}
\eeq
 and
\beq\label{cycle2}
\begin{array}{l}
(i)(jk)(lm)=(ilkmj)\cdot(ijlkm),\\
(i)(jk)(lm)(np)=(inj)(lkmp)\cdot(ijlknmp),\\
(i)(jk)(l_1\cdots l_r)=\begin{cases}
(ij)(l_1l_3\cdots l_rkl_2l_4\cdots l_{r-1})\cdot(ijl_r\cdots l_1k)&r\,\,\mbox{odd},\\
(ij)(l_1l_3\cdots l_{r-1})(l_2l_4\cdots l_rk)\cdot(ijl_r\cdots l_1k)&r\,\,\mbox{even}
\end{cases}
\end{array}
\eeq
for distinct tuples $i,j,k,l,m,n,p,l_1,\dots,l_r\in\bN$ with $r\geq3$.

\noindent
Case 1: $f=1$.\quad This is the most technical case. First by \eqref{cycle1}, \eqref{cycle2} one can easily see that
$\ell_D(w)=2$, when $w$ has one fixed point and at least one 2-cycle. For instance,
\[\begin{array}{l}
(1)(23)(45)=(14352)\cdot(12435),\\
(1)(23)(45)(67)=(162)(4357)\cdot(1243657),\\
(1)(23)(456)=(12)(4635)\cdot(126543),\\
(1)(23)(4567)=(12)(46)(573)\cdot(1276543).
\end{array}\]
Thus we still need to consider the case when $w$ has one fixed point and no 2-cycles.
By \eqref{cycle1} and that $\ell_D$ only depends on the
cycle type, we reduce such case to the special result:
\[\ell_D((1)(23\cdots n))=2,\,n>3.\]
For this, we find a tuple $(k_1,\dots,k_{n-2})\,(=\{3,4,\dots,n\}\mbox{ as sets})$ such that
$(1)(23\cdots n)$ is composed by the following two derangements, written as
\[\begin{array}{c@{\hspace{1em}}c@{\hspace{1em}}c@{\hspace{1em}}c@{\hspace{1em}}c@{\hspace{1em}}c}
1&2&3&\cdots&n-1&n\\
\downarrow&\downarrow&\downarrow&\cdots&\downarrow&\downarrow\\
2&k_1&k_2&\dots&k_{n-2}&1\\
\downarrow&\downarrow&\downarrow&\cdots&\downarrow&\downarrow\\
1&3&4&\cdots&n&2
\end{array}\]
Indeed we fix a positive integer $p$ such that $n-2\nmid p,p+1$.
Then take those distinct $k_j$'s in $\{3,4,\dots,n\}$ uniquely as
\[k_j\equiv j+p+2\mod n-2\]
for $j=1,\dots,n-2$.

For the permutation
$\tau=\begin{pmatrix}
1&2&3&\cdots&n-1&n\\
2&k_1&k_2&\cdots&k_{n-2}&1
\end{pmatrix}$, $\tau(j+1)=k_j$ and
$k_j-(j+1)\equiv p+1\mod n-2$ imply $\tau\in D_n$.
For the permutation
$\si=\begin{pmatrix}
2&k_1&k_2&\cdots&k_{n-2}&1\\
1&3&4&\cdots&n&2
\end{pmatrix}$,
$\si(k_j)=j+2$ and $k_j-(j+2)\equiv p\mod n-2$, thus $\si\in D_n$. Hence,
$(1)(23\cdots n)=\si\cdot\tau$ with $\si,\tau\in D_n$. For example, when $n=6$ we have
\[(1)(23456)=\begin{cases}
(12)(3654)\cdot(1246)(35)&\mbox{for }p=1,\\
(12)(35)(46)\cdot(125436)&\mbox{for }p=2.
\end{cases}\]
In summary, we have proved that $\ell_D(w)=2$ if $w$ has one fixed point.

\noindent
Case 2: $f=2$.\quad By \eqref{cycle}, \eqref{cycle1} one can easily see that
$\ell_D(w)=2$. For example,
\[\begin{array}{l}
(1)(2)(34)(56)=(12)(35)(46)(987)\cdot(12)(36)(45),\\
(1)(2)(34)(56)(78)=(13)(24)(57)(68)\cdot(1324)(58)(67),\\
(1)(2)(34567)=(12)(35746)\cdot(12)(76543).
\end{array}
\]

\noindent
Case 3: $f>2$.\quad By induction on $f$, there exist
permutations $w',w''$ on $\{3,4,\dots,n\}$ both with fixed points, such that $w=(12)w'\cdot(12)w''$.
Therefore, $\ell_D(w)=2$ if $w$ has more than two fixed points.
\end{proof}

\begin{rem}
By the intersecting properties of permutations (\cite[Prop. 6]{CK}) and the definition of $\Ga_n$, one can also see that $\Ga_n\,(n\geq4)$ is of diameter 2, while our alternative proof provides concrete construction.
\end{rem}

\section{Distance spectra of the derangement graphs}
Next we discuss the distance spectra of the derangement graphs.
For the group ring $\bQ[\fs_n]$, it is endowed with a regular representation of $\fs_n$, denoted by $\rho$. Let
$\{e_{\si_1},\dots,e_{\si_{n!}}\}_{\si_i\in\fs_n}$ be the natural basis of $\bQ[\fs_n]$.
Consider the linear operator
\[E_n:=\sum_{w\in\fs_n\backslash\{1\}}\ell_D(w)\rho(w)\]
on $\bQ[\fs_n]$. Then
\[(E_n(e_{\si_1}),\dots,E_n(e_{\si_{n!}}))=
(e_{\si_1},\dots,e_{\si_{n!}})d_{\Ga_n}.\]
Equivalently, the distance spectrum of $\Ga_n$ are the spectrum of
the operator $E$.

Since $\ell_D$ is a conjugacy class function on $\fs_n$,
$E$ commutes with any $\rho(w)\,(w\in\fs_n)$, i.e. $E$
is an endomorphism of the representation $\bQ[\fs_n]$.
By Schur's lemma, we know that the restrictions of $E$ act on irreducible representations by scalars.
Hence, we have
\begin{prop}\label{ds}
The eigenvalues of $d_{\Ga_n}$ are of the form
\[\ga_\chi=\dfrac{1}{\chi(1)}\sum_{w\in\fs_n\backslash\{1\}}\ell_D(w)\chi(w)\]
for any irreducible character $\chi$, with multiplicity $\chi(1)^2$.
\end{prop}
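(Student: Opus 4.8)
The plan is to use the observation recorded just above the statement: because $\ell_D$ is a class function, $E_n$ is a central element of $\bQ[\fs_n]$, hence commutes with the whole regular representation and, by Schur's lemma, acts as a scalar on every irreducible constituent. What remains is to identify that scalar for each irreducible character $\chi$ and to count its multiplicity as an eigenvalue of $d_{\Ga_n}$.

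To pin down the scalar, I would let $V_\chi$ be an irreducible module affording $\chi$, so that $E_n$ restricts to $\ga_\chi\cdot\mi$ on $V_\chi$ for some $\ga_\chi\in\bQ$. Taking the trace over $V_\chi$ gives $\mathrm{tr}_{V_\chi}(E_n)=\ga_\chi\,\chi(1)$ on one side, while expanding $E_n$ and using linearity of the trace gives $\mathrm{tr}_{V_\chi}(E_n)=\sum_{w\neq1}\ell_D(w)\,\mathrm{tr}_{V_\chi}(\rho(w))=\sum_{w\neq1}\ell_D(w)\,\chi(w)$ on the other. Equating the two expressions and dividing by $\chi(1)$ produces exactly the asserted formula for $\ga_\chi$.

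For the multiplicities I would invoke the decomposition of the regular representation, $\bQ[\fs_n]\cong\bigoplus_\chi V_\chi^{\oplus\chi(1)}$, summed over all irreducible characters $\chi$. (Here no subtlety over $\bQ$ versus $\bC$ arises, since every irreducible representation of $\fs_n$ is already defined over $\bQ$ and the values $\chi(w)$ are integers.) On each of the $\chi(1)$ copies of $V_\chi$ the operator $E_n$ acts as $\ga_\chi\cdot\mi$, so $\ga_\chi$ occurs as an eigenvalue on a subspace of dimension $\chi(1)\cdot\chi(1)=\chi(1)^2$. Since $d_{\Ga_n}$ is precisely the matrix of $E_n$ in the natural basis, the two have the same spectrum, and the eigenvalue $\ga_\chi$ appears with multiplicity $\chi(1)^2$.

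I expect no serious obstacle, as the argument is a routine application of Schur's lemma together with the trace. The only point deserving care is keeping the two roles of $\chi(1)$ distinct in the multiplicity count, namely its value as the dimension of $V_\chi$ and its value as the multiplicity of $V_\chi$ in the regular representation, whose product is what yields the $\chi(1)^2$ in the statement.
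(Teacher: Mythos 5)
Your proposal is correct and follows essentially the same route as the paper: the paper likewise derives the result from the observation that $E_n$, being built from the class function $\ell_D$, is an endomorphism of the regular representation $\bQ[\fs_n]$ and hence acts by scalars on irreducible constituents via Schur's lemma. Your trace computation identifying the scalar and your use of the decomposition $\bQ[\fs_n]\cong\bigoplus_\chi V_\chi^{\oplus\chi(1)}$ for the multiplicity $\chi(1)^2$ are exactly the (standard) details the paper leaves implicit.
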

It is well-known that the irreducible characters $\chi$ of $\fs_n$ are indexed by partition $\la\vdash n$, thus denoted by $\chi_\la$, and $\chi_\la(1)=f_\la$, the number of standard tableaux of shape $\la$. Now abbreviate $\ga_{\chi_\la}$ as $\ga_\la$ associated with $\chi_\la$.

In fact, by Prop. \ref{dis} we know that
\[d_{\Ga_n}=A_{\Ga_n^c}+J_n=2J_n-A_{\Ga_n},\]
where $A_\Ga$ is the adjacency matrix of $\Ga$, $\Ga_n^c$ is the complement graph of $\Ga_n^c$ and
$J_n$ is the circulant matrix with the zero diagonal and other entries equal to one. Hence, one can also obtain the distance spectrum of $\Ga_n$ from its adjacency spectrum calculated in \cite{KW,Re}.

\begin{prop}\label{di}
Write
\[\left\{\eta_1=D_n,\eta_2=\dfrac{n-1}{n-3}D_{n-2},\dots,\eta_{n!}=-\dfrac{D_n}{n-1}\right\}\]
as the adjacency spectrum of $\Ga_n$, then \[\{2(n!-1)-\eta_1,-2-\eta_2,\dots,-2-\eta_{n!}\}\]
is its distance spectrum.
\end{prop}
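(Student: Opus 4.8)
The plan is to read off the distance spectrum directly from the adjacency spectrum by exploiting the matrix identity $d_{\Ga_n}=2J_n-A_{\Ga_n}$ recorded just above the statement, which is itself an immediate consequence of Proposition \ref{dis}: off the diagonal, the entry of $d_{\Ga_n}$ equals $1$ exactly on the edges (where $A_{\Ga_n}$ has a $1$) and equals $2$ on the non-edges (where $A_{\Ga_n}$ has a $0$), while the diagonal is $0$ throughout. I would then write $J_n=\mathbf{J}-I$, where $\mathbf{J}=\mathbf{1}\mathbf{1}^{\top}$ is the $n!\times n!$ all-ones matrix and $\mathbf{1}$ the all-ones vector, so that $d_{\Ga_n}=2\mathbf{J}-2I-A_{\Ga_n}$.

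The key structural fact is that $\Ga_n$ is a connected regular graph of degree $D_n$, being a connected Cayley graph for $n>3$. Hence $A_{\Ga_n}$ admits an orthogonal eigenbasis $v_1,\dots,v_{n!}$ in which $v_1=\mathbf{1}$ is the Perron eigenvector with $A_{\Ga_n}v_1=D_nv_1=\eta_1v_1$, the eigenvalue $\eta_1=D_n$ being simple by connectedness, while every remaining $v_i\ (i\geq2)$ satisfies $v_i\perp\mathbf{1}$. Since $\mathbf{J}=\mathbf{1}\mathbf{1}^{\top}$, this gives $\mathbf{J}v_1=n!\,v_1$ and $\mathbf{J}v_i=0$ for $i\geq2$. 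Thus the same basis simultaneously diagonalizes $\mathbf{J}$, $I$, and $A_{\Ga_n}$, and therefore $d_{\Ga_n}$.

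With this common eigenbasis in hand, the computation reduces to a one-line substitution on each $v_i$. On $v_1$ one finds
\[d_{\Ga_n}v_1=(2n!-2-D_n)v_1=\bigl(2(n!-1)-\eta_1\bigr)v_1,\]
while on each $v_i\ (i\geq2)$ one finds $d_{\Ga_n}v_i=(-2-\eta_i)v_i$. This produces exactly the claimed list, with the multiplicity of each distance eigenvalue inherited from that of the corresponding adjacency eigenvalue $\eta_i$.

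I do not expect a substantial obstacle here; the argument is essentially a transfer of spectra through a rank-one perturbation. The single point requiring care is the simplicity of the Perron eigenvalue $\eta_1=D_n$, which ensures that only the all-ones direction receives the exceptional value $2(n!-1)-\eta_1$ and that no other eigenvector lying in a hypothetically larger $D_n$-eigenspace is erroneously assigned it. This simplicity is guaranteed by the connectedness of $\Ga_n$ for $n>3$, equivalently by the fact that $D_n$ generates $\fs_n$ in that range, which is precisely the regime in which Proposition \ref{dis} applies.
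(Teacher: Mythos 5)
Your proof is correct, and at the decisive step it takes a genuinely different route from the paper. Both arguments rest on the identity $d_{\Ga_n}=2J_n-A_{\Ga_n}$, which is exactly where Proposition \ref{dis} (diameter $2$ for $n>3$) enters. But where you invoke Perron--Frobenius --- connectedness makes the eigenvalue $D_n$ simple with Perron eigenvector $\mathbf{1}$, every other eigenvector of the symmetric matrix $A_{\Ga_n}$ may be taken orthogonal to $\mathbf{1}$, and hence $\mathbf{J}=\mathbf{1}\mathbf{1}^{\top}$ annihilates it --- the paper stays inside the representation theory of $\fs_n$: it observes that $J_n$ and $A_{\Ga_n}$ are the matrices of $\sum_{w\neq 1}\rho(w)$ and $\sum_{w\in D_n}\rho(w)$ in the regular representation, and since $\fs_n\setminus\{1\}$ and $D_n$ are unions of conjugacy classes these operators commute with $\rho$, so by Schur's lemma (Prop.~\ref{ds}) they act as scalars on each irreducible isotypic component; the trivial character then carries $2(n!-1)-\eta_1$ and every nontrivial character $\chi$ carries $-2-\eta_\chi$. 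Your argument is more elementary and more general: it works verbatim for any connected regular graph of diameter $2$ and uses no group structure at all, and your flagged point of care (simplicity of $D_n$, guaranteed by connectedness, which indeed follows from Proposition \ref{dis}) is precisely what keeps the exceptional value attached only to the all-ones direction. What the paper's route buys in exchange is the bookkeeping that the rest of the paper depends on: each distance eigenvalue arrives labelled by a partition $\la\vdash n$ with multiplicity $\chi_\la(1)^2=f_\la^2$, and the exceptional eigenvalue is pinned to the trivial character rather than merely to ``the Perron direction,'' which is what makes the later identification of extremal eigenvalues $\ga_\la$ (via Renteln's recurrence for the $\eta_\la$) possible.
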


On the other hand, the matrix of the endomorphism
$\sum\limits_{w\in\fs_n\backslash\{1\}}\rho(w)$
under the natural basis is obviously $J_n$ and $E_n=
2\sum\limits_{w\in\fs_n\backslash\{1\}}\rho(w)-\sum\limits_{w\in D_n}\rho(w)$, thus the matrix $d_{\Ga_n}$ of $E_n$ is $2J_n-A_{\Ga_n}$.
By Prop. \ref{ds}, we again have the eigenvalues of $d_{\Ga_n}$:
\[\ga_1:=2(n!-1)-\eta_1\mbox{ relative to the trivial character and }\ga_\chi:=-2-\eta_\chi,\,\chi\neq 1.\]

We recall \textit{Renteln's recurrence formula} for $\eta_\la$'s in \cite[Theorem 6.5]{Re}:
\beq\label{rec}
\eta_\la=(-1)^h(\eta_{\la-h}+(-1)^{\la_1}h\eta_{\la-1})
\eeq
with initial condition $\eta_\emptyset=1$, where $h$ denote either the principal hook of $\la$ or its cardinality,  $\la-1$ denote the partition obtained by removing the first column of $\la$.
In particular, we have
\[\begin{array}{l@{\vspace{0.3em}}}
\eta_{(n-i,1^i)}=(-1)^n+(-1)^inD_{n-1-i},\,1\leq i\leq n-1.\\
\eta_{(n-2,2)}=-(n-1)\eta_{(n-3,1)}=\dfrac{n-1}{n-3}D_{n-2}=(n-1)\left[(-1)^{n-1}+(n-2)D_{n-4}\right].\\
\eta_{(n-3,3)}=(-1)^{n-2}-(n-2)\eta_{(n-4,2)}=(-1)^{n-2}-\dfrac{(n-2)(n-3)}{n-5}D_{n-4}.
\end{array}\]
We also need the following \textit{alternating sign property} proved in \cite[Theorem 1.2]{KW}.
\beq\label{alt}\mb{sign}(\eta_\la)=(-1)^{\la-\la_1}\eeq
for any partition $\la$.


First we give several technical lemmas for the proof of our main result.
\begin{lem}\label{l2}
For $n\geq 6,\,4\leq i\leq n/2$, we have
\[|\eta_{(n-i,i)}|<|\eta_{(n+1-i,i-1)}|<\eta_{(n-2,1^2)}.\]
\end{lem}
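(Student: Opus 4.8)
The plan is to work throughout with the magnitudes $b_{n,i}:=|\eta_{(n-i,i)}|$ of the two-row eigenvalues and to extract from Renteln's recurrence \eqref{rec} a clean recursion for them. For $\lambda=(n-i,i)$ with $1\le i\le n/2$ the principal hook has length $h=n-i+1$, deleting the first column gives $\lambda-1=(n-i-1,i-1)$, and deleting the hook gives $\lambda-h=(i-1)$ with $\eta_{(i-1)}=D_{i-1}$. Plugging these into \eqref{rec} and using $\mbox{sign}(\eta_{(n-i,i)})=(-1)^i$ from \eqref{alt} to absorb all signs, I would first establish
\[b_{n,i}=(n-i+1)\,b_{n-2,i-1}+(-1)^{n+1}D_{i-1},\qquad 2\le i\le n/2,\]
together with $b_{n,1}=nD_{n-2}+(-1)^{n+1}$ and $b_{m,0}=D_m$. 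Unfolding this yields the closed form $b_{n,i}=\frac{(n-i+1)!}{(n-2i+1)!}D_{n-2i}+(-1)^{n+1}\sum_{k=1}^{i}\frac{(n-i+1)!}{(n-2i+k+1)!}D_{k-1}$, which I keep in reserve for lower bounds; note that when $n$ is odd every summand is positive.

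For the first inequality I would prove the slightly stronger monotonicity $b_{n,i}<b_{n,i-1}$ for all $3\le i\le n/2$ by induction on $n$ in steps of two. The base case $i=3$ is the comparison of $\eta_{(n-3,3)}$ with $\eta_{(n-2,2)}$, read off from the explicit formulas already displayed: $b_{n,2}-b_{n,3}$ has leading coefficient $(n-2)\frac{n^2-7n+8}{n-5}>0$ against $D_{n-4}$, which outweighs the $O(n)$ correction for all $n\ge6$. For the inductive step ($i\ge4$) I feed the case $(n-2,i-1)$, namely $b_{n-2,i-1}<b_{n-2,i-2}$ (legitimate since $3\le i-1\le(n-2)/2$), into the recursion; cancelling the coefficient $(n-i+1)$ the whole step collapses to the scalar condition
\[(-1)^{n+1}\bigl(D_{i-1}-D_{i-2}\bigr)\le b_{n-2,i-2}.\]
Because the base case feeds only the value $i=3$ down the chain $i,i-1,\dots,3$ at levels $n,n-2,\dots,n-2i+6\ge6$, the induction is self-contained and needs no separate small-$n$ anchors.

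The case $n$ even is immediate, the left side being negative, so the heart of the matter is $n$ odd, and this is the step I expect to be the main obstacle. Here $D_{i-1}-D_{i-2}=(i-2)D_{i-2}+(-1)^{i-1}$ is genuinely large, and one cannot simply assert that the dominant factorial term of $b_{n-2,i-2}$ wins, since that term shrinks as $i$ approaches $n/2$. The clean way out is to use that for $n$ odd all terms of the unfolded $b_{n-2,i-2}$ are positive, so $b_{n-2,i-2}$ is at least its leading term $\frac{(n-i+1)!}{(n-2i+3)!}D_{n-2i+2}\ge\frac{(n-i+1)!}{(n-2i+3)!}$ (here $n-2i+2\ge2$, hence $D_{n-2i+2}\ge1$). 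Dividing by $(i-1)!$ turns the needed bound into $\frac{1}{n-2i+3}\binom{n-i+1}{i-1}>1$, i.e. into the transparent inequality $\binom{n-i+1}{i-1}>n-2i+3$, which holds throughout $4\le i\le n/2$, $n\ge6$ (a binomial coefficient of cubic-or-higher order against a linear quantity). Since $(i-1)!>(i-2)D_{i-2}+1\ge D_{i-1}-D_{i-2}$, this closes the odd case.

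Finally, for the second inequality $b_{n,i-1}<\eta_{(n-2,1^2)}$ (with $4\le i\le n/2$) I would invoke the monotonicity just proved: it forces $b_{n,i-1}\le b_{n,3}$, so it suffices to compare $b_{n,3}$ with $\eta_{(n-2,1^2)}$. Substituting $\eta_{(n-2,1^2)}=(-1)^n+nD_{n-3}$, $b_{n,3}=\frac{(n-2)(n-3)}{n-5}D_{n-4}-(-1)^n$, and $D_{n-3}=(n-3)D_{n-4}+(-1)^{n-3}$ from \eqref{ide}, the inequality reduces to $n(n-3)>\frac{(n-2)(n-3)}{n-5}$, i.e. to $n^2-6n+2>0$, valid for $n\ge6$; the remaining $O(n)$ terms are dwarfed by the $\Theta(n^2D_{n-4})$ gap. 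Throughout I would lean on \eqref{der1} and \eqref{ide} to trade derangement numbers for the comparable factorials whenever only orders of magnitude are at stake.
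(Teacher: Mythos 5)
Your proposal is correct, and at the level of skeleton it is the same induction as the paper's: both proofs feed Renteln's recurrence \eqref{rec} and the sign property \eqref{alt} into a descent from $(n,i)$ to $(n-2,i-1)$, and both bottom out in a scalar inequality comparing $D_{i-1}-D_{i-2}$ with $|\eta_{(n-i,i-2)}|$. But your execution differs in three genuine ways, each buying a simplification. First, by absorbing \eqref{alt} into \eqref{rec} once and for all you obtain the exact recursion $b_{n,i}=(n-i+1)b_{n-2,i-1}+(-1)^{n+1}D_{i-1}$ for the absolute values (which I checked: the signs work out), whereas the paper keeps signed quantities and uses triangle inequalities; this forces the paper into a parity case split in which the same-parity (even $n$) case needs the \emph{stronger} scalar inequality $D_{i-1}+D_{i-2}<|\eta_{(n-i,i-2)}|$, proved by an extra nested induction. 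In your version the even-$n$ case is free, precisely because the exact signs show the $D$-terms help rather than hurt. Second, for the surviving odd-$n$ scalar inequality the paper runs yet another induction on $i$ via \eqref{ide}, while you unfold the recursion into a closed form, lower-bound $b_{n-2,i-2}$ by its leading factorial term (legitimate since for odd $n$ all summands are nonnegative --- note the $D_1$ summand vanishes, so ``positive'' should read ``nonnegative''), and finish with $D_{i-1}-D_{i-2}<(i-1)!$ and $\binom{n-i+1}{i-1}\ge n-2i+3$; the latter holds because $2\le i-1\le (n-i+1)-2$ throughout the range, so $\binom{n-i+1}{i-1}\ge\binom{n-i+1}{2}=\tfrac{(n-i+1)(n-i)}{2}\ge n-2i+3$. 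This trades induction for an explicit estimate. Third, for the right-hand inequality both arguments reduce to comparing $|\eta_{(n-3,3)}|$ with $\eta_{(n-2,1^2)}$: the paper uses the $n!/e$ approximation \eqref{der1}, you use the exact identities \eqref{ide}; your route additionally requires the base case $b_{n,3}<b_{n,2}$ (which the paper's chain, anchored at $i=4$, never needs), and that computation is sound: the difference equals $(n-2)\bigl[\tfrac{n^2-7n+8}{n-5}D_{n-4}+(-1)^{n-1}\bigr]>0$ for $n\ge6$. The only caveat is presentational: your ``outweighs/dwarfed'' claims are tight at $n=6$ (e.g.\ in the final comparison the gap term is $6$ against a correction of $4$), so the write-up must evaluate those small cases explicitly rather than argue by order of magnitude.
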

\begin{proof}
First, we have
\[\eta_{(n-2,1^2)}=nD_{n-3}+(-1)^n\geq n\left(\dfrac{(n-3)!}{e}-\dfrac{1}{n-2}\right)-1
=\dfrac{n(n-3)!}{e}-\dfrac{2n-2}{n-2},\]
and
\[\begin{split}
|\eta_{(n-3,3)}|&=|(-1)^{n-2}D_2-(n-2)\eta_{(n-4,2)}|
\leq1+(n-2)(n-3)|\eta_{(n-5,1)}|\\
&=1+\dfrac{(n-2)(n-3)D_{n-4}}{n-5}\leq \dfrac{n-2}{n-5}\dfrac{(n-3)!}{e}+\dfrac{2n-7}{n-5}.
\end{split}\]
Hence, $|\eta_{(n-3,3)}|<\eta_{(n-2,1^2)}$ for $n\geq7$ and the case when $n=6$ is easy to see.

Now we only need to prove that $|\eta_{(n-i,i)}|<|\eta_{(n+1-i,i-1)}|$ for $4\leq i\leq n/2$. It will be done by induction on $i$. For $i=4$, it is easy to check. Suppose $i,\,n-i$ have different parity, then
\[\begin{split}
|\eta_{(n-i,i)}|&-|\eta_{(n+1-i,i-1)}|=|(-1)^{n+1-i}\eta_{(i-1)}-(n+1-i)\eta_{(n-1-i,i-1)}|\quad\mbox{by Formula \eqref{rec}}\\
&-|(-1)^{n-i}\eta_{(i-2)}-(n+2-i)\eta_{(n-i,i-2)}|\\
&=D_{i-1}-D_{i-2}+(n+1-i)\eta_{(n-1-i,i-1)}-(n+2-i)\eta_{(n-i,i-2)}\quad\mbox{by Property \eqref{alt}}\\
&<D_{i-1}-D_{i-2}-\eta_{(n-i,i-2)}<0\quad\mbox{by induction}\\
&=D_{i-1}-D_{i-2}-|(-1)^{n+1-i}\eta_{(i-3)}-(n+1-i)\eta_{(n-1-i,i-3)}|\\
&=D_{i-1}-D_{i-2}-D_{i-3}-(n+1-i)|\eta_{(n-1-i,i-3)}|\quad\mbox{by Property \eqref{alt}}\\
\end{split}\]
Thus we reduce it to the proof that $D_{i-1}-D_{i-2}-|\eta_{(n-i,i-2)}|<0$ when $i,\,n-i$ have different parity, by induction on $i$. It is clear for $i=4$. Further, we have
\[\begin{split}D_{i-1}-D_{i-2}-|\eta_{(n-i,i-2)}|
&=D_{i-1}-D_{i-2}-|(-1)^{n+1-i}\eta_{(i-3)}-(n+1-i)\eta_{(n-1-i,i-3)}|\\
&=D_{i-1}-D_{i-2}-D_{i-3}-(n+1-i)|\eta_{(n-1-i,i-3)}|\quad\mbox{by Property \eqref{alt}}\\
&<D_{i-1}-D_{i-2}-D_{i-3}-(n+1-i)(D_{i-2}-D_{i-3})\quad\mbox{by induction}\\
&=(i-3)(D_{i-2}+D_{i-3})-(n+1-i)(D_{i-2}-D_{i-3})\quad\mbox{by \eqref{ide}}\\
&=-(n+4-2i)D_{i-2}+(n-2)D_{i-3}\\
&=-(n+4-2i)((i-2)D_{i-3}+(-1)^i)+(n-2)D_{i-3}\quad\mbox{by \eqref{ide}}\\
&=-(n+2-2i)(i-3)D_{i-3}-(-1)^i(n+4-2i)\\
&=-(n+2-2i)[(i-3)D_{i-3}+(-1)^i]-2(-1)^i\leq0\quad\mbox{when }5\leq i\leq n/2.
\end{split}\]
When $i,\,n-i$ have the same parity, then
\[\begin{split}
|\eta_{(n-i,i)}|&-|\eta_{(n+1-i,i-1)}|=|(-1)^{n+1-i}\eta_{(i-1)}-(n+1-i)\eta_{(n-1-i,i-1)}|\quad\mbox{by Formula \eqref{rec}}\\
&-|(-1)^{n-i}\eta_{(i-2)}-(n+2-i)\eta_{(n-i,i-2)}|\\
&<D_{i-1}+D_{i-2}+(n+1-i)|\eta_{(n-1-i,i-1)}|-(n+2-i)|\eta_{(n-i,i-2)}|\quad\mbox{by Property \eqref{alt}}\\
&<D_{i-1}+D_{i-2}-|\eta_{(n-i,i-2)}|\quad\mbox{by induction}.
\end{split}\]
Hence, it is reduced to the proof that $D_{i-1}+D_{i-2}-|\eta_{(n-i,i-2)}|<0$ when $i,\,n-i$ have the same parity, again by induction on $i$. It is clear for $i=4$. Moreover,
\[\begin{split}D_{i-1}+D_{i-2}-|\eta_{(n-i,i-2)}|
&=D_{i-1}+D_{i-2}-|(-1)^{n+1-i}\eta_{(i-3)}-(n+1-i)\eta_{(n-1-i,i-3)}|\\
&\leq D_{i-1}+D_{i-2}+D_{i-3}-(n+1-i)|\eta_{(n-1-i,i-3)}|\quad\mbox{by Property \eqref{alt}}\\
&<D_{i-1}+D_{i-2}+D_{i-3}-(n+1-i)(D_{i-2}+D_{i-3})\quad\mbox{by induction}\\
&=D_{i-1}-\dfrac{n-i}{i-2}D_{i-1}<0\quad\mbox{by \eqref{ide} and as }4\leq i\leq n/2.
\end{split}\]
In summary, we finally verify the desired inequality.
\end{proof}
We note that without using \eqref{alt} it is much more straightforward only to show that \[|\eta_{(n-i,i)}|<|\eta_{(n-3,3)}|=-\eta_{(n-3,3)},\,4\leq i\leq n/2,\]
which is enough indeed for the forthcoming discussion of our main result.


In particular, partitions $\la=(n-i,1^i)\vdash n$, $0\leq i\leq n-1$, are called \textit{hooks}.
\begin{lem}\label{l3}
For $3\leq i\leq n-1$, we have
\[|\eta_{(n-i,1^i)}|<\eta_{(n-2,1^2)}.\]
For $3\leq i\leq n-1,\,n\geq6$, we have
\[\eta_{(n-3,3)}<-|\eta_{(n-i,1^i)}|.\]
\end{lem}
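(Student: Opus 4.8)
The plan is to reduce both inequalities to elementary comparisons of closed forms in the derangement numbers $D_m$, using the explicit hook values $\eta_{(n-i,1^i)}=(-1)^n+(-1)^inD_{n-1-i}$ together with $\eta_{(n-2,1^2)}=nD_{n-3}+(-1)^n$ and $\eta_{(n-3,3)}=(-1)^{n-2}-\frac{(n-2)(n-3)}{n-5}D_{n-4}$ listed right after Renteln's formula \eqref{rec}. The common first move is to produce one uniform upper bound covering all the relevant hooks at once: for $3\le i\le n-1$ and $n\ge6$, the triangle inequality gives $|\eta_{(n-i,1^i)}|\le 1+nD_{n-1-i}$, and since $D_m$ is increasing for $m\ge1$ while $D_0=1\le D_{n-4}$, the index $n-1-i$ ranging over $\{0,\dots,n-4\}$ yields the clean estimate $|\eta_{(n-i,1^i)}|\le 1+nD_{n-4}$. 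It is then enough to bound this single quantity against $\eta_{(n-2,1^2)}$ and against $|\eta_{(n-3,3)}|$. (I would flag here that the step $D_0\le D_{n-4}$ is exactly what breaks at $n=5$, where $D_{n-4}=D_1=0$; this is the boundary behaviour explaining the $n\ge6$ regime.)

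For the first inequality I would show $1+nD_{n-4}<nD_{n-3}-1\le\eta_{(n-2,1^2)}$. This reduces to $n(D_{n-3}-D_{n-4})>2$, and the recurrence $D_{n-3}=(n-3)D_{n-4}+(-1)^{n-3}$ from \eqref{ide} rewrites the left factor as $(n-4)D_{n-4}+(-1)^{n-3}\ge1$ for $n\ge6$, whence $n(D_{n-3}-D_{n-4})\ge n>2$.

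For the second inequality, I first use \eqref{alt} to note that $\eta_{(n-3,3)}<0$ (its sign is $(-1)^{3}$), so the stated claim $\eta_{(n-3,3)}<-|\eta_{(n-i,1^i)}|$ is equivalent to $|\eta_{(n-i,1^i)}|<|\eta_{(n-3,3)}|$. Since the negative term dominates, $|\eta_{(n-3,3)}|=\frac{(n-2)(n-3)}{n-5}D_{n-4}-(-1)^{n-2}\ge\frac{(n-2)(n-3)}{n-5}D_{n-4}-1$, so it suffices to verify $1+nD_{n-4}<\frac{(n-2)(n-3)}{n-5}D_{n-4}-1$. Because $(n-2)(n-3)=n(n-5)+6$, the difference of leading coefficients is exactly $\frac{6}{n-5}$, and the inequality collapses to $\frac{6}{n-5}D_{n-4}>2$, i.e. $D_{n-4}>\frac{n-5}{3}$, which holds for all $n\ge6$ since $D_{n-4}\ge n-5$ (indeed $D_m\ge m-1$ for $m\ge2$) already exceeds $\frac{n-5}{3}$.

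The main delicate point is precisely this last comparison. In the second inequality the two dominant terms $nD_{n-4}$ and $\frac{(n-2)(n-3)}{n-5}D_{n-4}$ have leading coefficients differing only by the vanishing factor $\frac{6}{n-5}$, so the crude constant error terms $\pm1$ are not automatically negligible, and one must confirm that the factorial growth of $D_{n-4}$ overrides them — including the tight small cases $n=6,7$, where $D_{n-4}$ is only $1$ and $2$. Every other estimate is comfortable and follows directly from the monotonicity of $D_m$ and the recurrences \eqref{ide}.
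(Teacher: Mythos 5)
Your argument is correct and is essentially the paper's own proof in expanded form: the paper's entire justification consists of quoting the closed hook formula for the first inequality and the one-line chain $\eta_{(n-3,3)}<-1-nD_{n-4}\le-|\eta_{(n-i,1^i)}|$ for the second, which is exactly your uniform bound $|\eta_{(n-i,1^i)}|\le 1+nD_{n-4}$ combined with the reduction via $(n-2)(n-3)=n(n-5)+6$ to $\tfrac{6}{n-5}D_{n-4}>2$. One point in your favor: restricting the first inequality to $n\ge6$ is actually necessary, not just convenient, since the paper's unrestricted claim fails at the boundary (at $n=4$ one has $|\eta_{(1^4)}|=3>1=\eta_{(2,1^2)}$, and at $n=5$ one has $|\eta_{(1^5)}|=4=\eta_{(3,1^2)}$, so strictness fails), so your flag about $D_{n-4}$ vanishing at $n=5$ correctly identifies an imprecision in the paper's own statement of the lemma.
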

\begin{proof}
The first family of inequalities are clear since \[\eta_{(n-i,1^i)}=(-1)^n\left(1+(-1)^{n-i}nD_{n-i-1}\right),\,1\leq i\leq n-1,\]
while for the second one,
\[\eta_{(n-3,3)}=(-1)^{n-2}-\dfrac{(n-2)(n-3)}{n-5}D_{n-4}<-1-nD_{n-4}\leq-|\eta_{(n-i,1^i)}|\]
for $3\leq i\leq n-1,\,n\geq6$.
\end{proof}

Partitions $\la=(n-2-i,2,1^i)\vdash n$, $0\leq i\leq n-4$, are called \textit{near hooks}.
\begin{lem}\label{l4}
For $1\leq i\leq n-4$, we have
\[|\eta_{(n-2-i,2,1^i)}|<\eta_{(n-2,1^2)}.\]
For $1\leq i\leq n-4,\,n\geq6$, we have
\[\eta_{(n-3,3)}<-|\eta_{(n-2-i,2,1^i)}|.\]
\end{lem}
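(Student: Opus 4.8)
The plan is to reduce the near hook eigenvalue to a smaller hook eigenvalue via Renteln's recurrence \eqref{rec}, and then run estimates parallel to those of Lemma \ref{l3}; the only genuinely tight point is the second inequality at $i=1$.

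First I would apply \eqref{rec} to $\la=(n-2-i,2,1^i)$. Its principal hook has length $h=(n-2-i)+(i+2)-1=n-1$; deleting it leaves the single box $\la-h=(1)$, with $\eta_{(1)}=0$, while deleting the first column gives $\la-1=(n-3-i,1)$. Thus \eqref{rec} collapses to
\[\eta_{(n-2-i,2,1^i)}=(-1)^{i+1}(n-1)\,\eta_{(n-3-i,1)}.\]
Reading off the hook value $\eta_{(n-3-i,1)}=(-1)^{n-2-i}-(n-2-i)D_{n-4-i}$, which is negative by \eqref{alt}, I obtain the explicit magnitude
\[\bigl|\eta_{(n-2-i,2,1^i)}\bigr|=(n-1)\bigl[(n-2-i)D_{n-4-i}-(-1)^{n-2-i}\bigr],\qquad 1\le i\le n-4.\]
This is exactly the near hook analogue of the closed form used for hooks in Lemma \ref{l3}: a near hook eigenvalue is $\pm(n-1)$ times a smaller hook eigenvalue.

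Next I would show this magnitude is non-increasing in $i$, so that both inequalities reduce to the single instance $i=1$. Indeed $(n-2-i)D_{n-4-i}$ shrinks as $i$ grows: passing from $i$ to $i+1$ replaces the factor $D_{n-4-i}=(n-5-i)(D_{n-5-i}+D_{n-6-i})$ by the strictly smaller $D_{n-5-i}$, using \eqref{ide}. The two tail cases $i\in\{n-5,n-4\}$ give the constant value $n-1$ (since $D_1=0$, $D_0=1$), which is trivially dominated by both $\eta_{(n-2,1^2)}$ and $|\eta_{(n-3,3)}|$ when $n\ge6$.

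It then remains to settle $i=1$. For the first inequality I compare $|\eta_{(n-3,2,1)}|=(n-1)[(n-3)D_{n-5}-(-1)^{n-3}]$ with $\eta_{(n-2,1^2)}=nD_{n-3}+(-1)^n$; since $D_{n-3}\ge (n-4)(n-5)D_{n-5}$ by \eqref{ide}, the margin is wide and the estimate is routine. The real obstacle is the second inequality at $i=1$, namely $|\eta_{(n-3,2,1)}|<|\eta_{(n-3,3)}|$, where the leading terms have ratio $(n-1)(n-5)/[(n-4)(n-2)]\to 1^-$ so that asymptotics alone are inconclusive. Here I would expand $|\eta_{(n-3,3)}|=(n-2)(n-3)(D_{n-5}+D_{n-6})-(-1)^{n-2}$ using $D_{n-4}=(n-5)(D_{n-5}+D_{n-6})$, and after cancelling the common factor $n-3$ reduce the claim to the clean inequality
\[D_{n-5}<(n-2)D_{n-6}+\frac{n}{n-3}(-1)^{n-3},\]
which follows from $D_{n-5}=(n-6)(D_{n-6}+D_{n-7})$ together with $D_{n-6}\ge(n-7)D_{n-7}$ for $n$ large, the cases $n=6,7$ being checked by hand. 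I expect this final separation of two near-equal quantities — keeping just enough precision via \eqref{ide} (or, alternatively, \eqref{der1}) — to be where essentially all the work lies; everything else is bookkeeping with \eqref{rec} and \eqref{alt}.
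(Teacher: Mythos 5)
You take essentially the same route as the paper. The closed form you extract from \eqref{rec}, namely $\eta_{(n-2-i,2,1^i)}=(-1)^{i+1}(n-1)\eta_{(n-3-i,1)}$, i.e. $|\eta_{(n-2-i,2,1^i)}|=(n-1)\bigl[(n-2-i)D_{n-4-i}-(-1)^{n-2-i}\bigr]$, is exactly the formula the paper's proof starts from; like you, the paper reduces both inequalities to the case $i=1$ (it simply asserts $|\eta_{(n-i-2,2,1^i)}|\le|\eta_{(n-3,2,1)}|$, where you supply a monotonicity argument), and then settles $i=1$ by elementary manipulation with \eqref{ide}. The only substantive difference is the endgame of the second inequality: the paper substitutes $D_{n-4}=(n-4)D_{n-5}+(-1)^n$ into $\eta_{(n-3,3)}$ and compares directly with $-(n-1)\left((-1)^n+(n-3)D_{n-5}\right)$; since the coefficient $\frac{(n-2)(n-3)(n-4)}{n-5}$ exceeds $(n-1)(n-3)$ by $\frac{3(n-3)}{n-5}$, that comparison collapses to the triviality $(n-3)D_{n-5}>2(-1)^{n+1}$. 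Your substitution $D_{n-4}=(n-5)(D_{n-5}+D_{n-6})$ instead leaves $D_{n-5}<(n-2)D_{n-6}+\tfrac{n}{n-3}(-1)^{n-1}$, which costs one more application of \eqref{ide}.

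That extra step is where your one genuine slip sits. Your sufficient chain --- $D_{n-5}=(n-6)(D_{n-6}+D_{n-7})$ combined with $D_{n-6}\ge(n-7)D_{n-7}$ --- boils down to $(3n-22)D_{n-7}>\tfrac{n}{n-3}$, and this fails at $n=8$ because $D_1=0$. The target inequality is true there ($D_3=2<6D_2-\tfrac{8}{5}$), so the fix is only to add $n=8$ to your hand-checked list, but as written ``for $n$ large, with $n=6,7$ by hand'' does not cover it. One further caveat, shared with the paper itself: the first inequality of the lemma is an equality at $n=5$ (one computes $|\eta_{(2,2,1)}|=4=\eta_{(3,1^2)}$), so your ``wide margin'' claim --- and the paper's own chain, which asserts the strict bound ``for $n\ge5$'' --- is only valid for $n\ge6$; this is precisely why the main theorem treats $n=5$ separately.
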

\begin{proof}
Note that
\[\eta_{(n-i-2,2,1^i)}=(n-1)\left((-1)^{n-1}+(-1)^i(n-i-2)D_{n-i-4}\right),\,1\leq i\leq n-4,\]
thus we have
\[\begin{split}
|\eta_{(n-i-2,2,1^i)}|&\leq|\eta_{(n-3,2,1)}|\leq(n-1)\left(1+(n-3)D_{n-5}\right)\\
&<(-1)^n+nD_{n-3}=\eta_{(n-2,1^2)}
\end{split}\]
for $n\geq5$.
For the second one, we have
\[\begin{split}
\eta_{(n-3,3)}&=(-1)^{n-2}-\dfrac{(n-2)(n-3)}{n-5}D_{n-4}
=(-1)^{n-2}-\dfrac{(n-2)(n-3)}{n-5}\lb(n-4)D_{n-5}+(-1)^n\rb\\
&<-(n-1)((-1)^n+(n-3)D_{n-5})=-|\eta_{(n-3,2,1)}|\leq-|\eta_{(n-i-2,2,1^i)}|
\end{split}\]
for $1\leq i\leq n-4,\,n\geq6$.
\end{proof}

\begin{lem}\label{l5}
For $3\leq i\leq(n-1)/2,\,n\geq7$, we have
\[|\eta_{(n-i-1,i,1)}|<-\eta_{(n-3,3)}.\]
For $2\leq j\leq i\leq(n-2)/2$, we have
\[|\eta_{(n-i-j,i,j)}|<-\eta_{(n-3,3)}.\]
\end{lem}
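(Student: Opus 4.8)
The plan is to handle both families by one mechanism: apply Renteln's recurrence~\eqref{rec} once to peel off the principal hook of the given three-row shape, then use the alternating sign property~\eqref{alt} and the triangle inequality to turn the resulting identity into an upper bound for $|\eta_\la|$, and finally compare with $-\eta_{(n-3,3)}=\frac{(n-2)(n-3)}{n-5}D_{n-4}-(-1)^n$, which is positive since $\eta_{(n-3,3)}<0$ by~\eqref{alt}. To keep the comparisons transparent I would repeatedly apply~\eqref{ide}, for instance $D_{n-4}=(n-4)D_{n-5}+(-1)^n$, so as to expose the dominant term $\frac{(n-2)(n-3)(n-4)}{n-5}D_{n-5}$ of $-\eta_{(n-3,3)}$; the governing fact is that this quantity dwarfs everything being bounded, with slack of order $n$.

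For the first family $\la=(n-i-1,i,1)$ the principal hook has cardinality $n-i+1$, removing it leaves the single row $(i-1)$, and removing the first column leaves the two-row shape $(n-i-2,i-1)$, so~\eqref{rec} gives $\eta_{(n-i-1,i,1)}=(-1)^{n-i+1}D_{i-1}+(n-i+1)\eta_{(n-i-2,i-1)}$, whence $|\eta_{(n-i-1,i,1)}|\le D_{i-1}+(n-i+1)|\eta_{(n-i-2,i-1)}|$. First I would bound the two-row factor: applying Lemma~\ref{l2} to $\fs_{n-3}$, supplemented by the elementary comparison $|\eta_{(n-6,3)}|<\eta_{(n-5,2)}$, shows that among two-row shapes of $n-3$ with second part at least $2$ the magnitude is largest at $(n-5,2)$, so that $|\eta_{(n-i-2,i-1)}|\le\eta_{(n-5,2)}=\frac{n-4}{n-6}D_{n-5}$ for $2\le i-1\le(n-3)/2$. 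Substituting this and noting $D_{i-1}\le D_{(n-3)/2}$ is negligible, the claim reduces (up to lower-order terms) to $(n-2)\frac{n-4}{n-6}D_{n-5}<\frac{(n-2)(n-3)(n-4)}{n-5}D_{n-5}$, where the left side is smaller by a factor of order $n$; this settles $n\ge 7$ once the smallest few values are checked by hand.

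For the second family $\la=(n-i-j,i,j)$ with $j\ge 2$, where the requirement that $\la$ be a partition forces $n-i-j\ge i$ and hence $i\le(n-2)/2$, I would induct on $j$. Now the principal hook has cardinality $n-i-j+2$, its removal leaves the two-row shape $(i-1,j-1)$, and removal of the first column leaves the three-row shape $(n-i-j-1,i-1,j-1)\vdash n-3$, so~\eqref{rec} yields $|\eta_{(n-i-j,i,j)}|\le|\eta_{(i-1,j-1)}|+(n-i-j+2)|\eta_{(n-i-j-1,i-1,j-1)}|$. The two-row term is at most the valency $D_{i+j-2}\le D_{n-4}$, negligible against $-\eta_{(n-3,3)}$. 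The three-row term is controlled by the inductive hypothesis when $j\ge 3$, since $(n-i-j-1,i-1,j-1)$ is again of the second type for $\fs_{n-3}$ with smaller smallest part; when $j=2$ it is $(n-i-3,i-1,1)$, a shape of the first type for $i\ge 4$ or the near-hook $(n-6,2,1)$ covered by Lemma~\ref{l4} for $i=3$, and in either case it is bounded by $-\eta_{(n-6,3)}$. Since $-\eta_{(n-3,3)}$ exceeds $-\eta_{(n-6,3)}$ by a factor of order $n^3$ while the multiplier $n-i-j+2$ is at most $n$, the total stays below $-\eta_{(n-3,3)}$ with room to spare.

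The hard part will be the base cases that lie outside this clean inductive pattern. The sharpest is $i=j=2$: peeling $(n-4,2,2)$ produces the hook $(n-5,1^2)$, whose eigenvalue $(n-3)D_{n-6}+(-1)^{n-1}$ is \emph{not} bounded by $-\eta_{(n-6,3)}$, so the induction cannot be invoked verbatim. Here I would instead estimate $(n-2)\,|\eta_{(n-5,1^2)}|$ directly, of order $n^2(n-6)!$, and check that it remains below $-\eta_{(n-3,3)}$, of order $n^3(n-6)!$. What then remains is routine bookkeeping: verifying the two auxiliary two-row comparisons used above and checking by hand the finitely many small triples $(n,i,j)$ for which the factor-of-$n$ slack is not yet decisive.
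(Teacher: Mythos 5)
Your proposal is correct, and its skeleton is the same as the paper's: one application of Renteln's recurrence \eqref{rec} to peel the principal hook, the triangle inequality, the valency bound $|\eta_{(i-1,j-1)}|\le D_{i+j-2}\le D_{n-4}$ for the hook-removed piece, and the same final factor-of-$n$ comparison against $-\eta_{(n-3,3)}$. The genuine divergence is in how the first-column-removed shape is controlled. The paper does both families in one stroke: citing \cite[Theorem 1.1]{DZ} applied to $\fs_{n-3}$, it bounds every remainder, two-row or three-row, by $\eta_{(n-5,2)}$, and then checks $D_{n-4}+(n-2)\eta_{(n-5,2)}<-\eta_{(n-3,3)}$ once, with no induction and no exceptional cases. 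You instead keep the argument internal to the paper's own lemmas: Lemma \ref{l2} plus the auxiliary comparison $|\eta_{(n-6,3)}|<\eta_{(n-5,2)}$ for the two-row remainders of the first family, and, for the second family, an induction on $j$ that invokes the lemma itself at $\fs_{n-3}$ (bounding the three-row remainder by $-\eta_{(n-6,3)}$), with Lemma \ref{l4} catching $(n-6,2,1)$ and a separate direct estimate for $i=j=2$, whose remainder $(n-5,1^2)$ you correctly flag as a hook too large for the inductive bound. What the paper's route buys is brevity; what yours buys is self-containedness, and in fact a somewhat sounder footing: \cite[Theorem 1.1]{DZ} bounds eigenvalues from above only, so the paper's use of it for \emph{absolute values} of the negatively-signed remainders (those $(n-i-2,i-1)$ with $i$ even, say) is not literally covered by that citation, whereas your Lemma-\ref{l2}-plus-induction route bounds absolute values directly. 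The cost is the scatter of finite hand checks you defer ($n=7,8$ in the first family, where Lemma \ref{l2} at $\fs_{n-3}$ needs $n\ge9$; the small triples and $n=6$ in the second family), all of which do go through.
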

\begin{proof}
By \cite[Theorem 1.1]{DZ} we know that $\eta_{(n-2,2)}$ is the second largest adjacency eigenvalue of $\Ga_n$, thus
\[\begin{split}
|\eta_{(n-i-1,i,1)}|&\leq\eta_{(i-1)}
+(n-i+1)|\eta_{(n-i-2,i-1)}|\leq D_{n-4}+(n-2)\eta_{(n-5,2)}\\
&=D_{n-4}+\dfrac{(n-2)(n-4)}{n-6}D_{n-5}
<(-1)^{n-1}+\dfrac{(n-2)(n-3)}{n-5}D_{n-4}
=-\eta_{(n-3,3)}.
\end{split}\]
For the second one, we similarly have
\[\begin{split}
|\eta_{(n-i-j,i,j)}|&\leq|\eta_{(i-1,j-1)}|
+(n-i-j+2)|\eta_{(n-i-j-1,i-1,j-1)}|\\
&\leq D_{i+j-2}+(n-i-j+2)\eta_{(n-5,2)}\leq D_{n-4}+(n-2)\eta_{(n-5,2)}<-\eta_{(n-3,3)}
\end{split}\]
when $n\geq7$, and the case for $n=6$ is also clear.
\end{proof}

\begin{rem}
In \cite[Theorem 1.6]{KW1}, Ku and Wong have proven that
for $\la,\la'\vdash n$ with $\la_1=\la'_1$, $\la\preceq\la'$ (in dominant order) implies
 $|\eta_\la|\leq|\eta_{\la'}|$. However, it is not enough for us to find all the extremal eigenvalues of $d_{\Ga_n}$.
\end{rem}

Now we are in the position to give our main results.
\begin{theorem}
The smallest eigenvalue for $d_{\Ga_n}\,(n\geq4)$ is
\[\ga_{(n-2,2)}=-2-\dfrac{n-1}{n-3}D_{n-2}=-2+(n-1)\left[(-1)^n-(n-2)D_{n-4}\right]\]
with multiplicity $\chi_{(n-2,2)}(1)^2$, and the second smallest one is
\[\ga_{(n-2,1^2)}=-2+\left[(-1)^{n-1}-nD_{n-3}\right]\]
with multiplicity $\chi_{(n-2,1^2)}(1)^2$ when $n\geq4,\,n\neq 5$. For $d_{\Ga_5}$,
$\ga_{(3,2)}=\ga_{(3,1^2)}=4$ is smallest.
\end{theorem}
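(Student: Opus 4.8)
The plan is to transfer the whole problem to the adjacency spectrum via the identity $\ga_\la=-2-\eta_\la$, valid for every nontrivial $\la\neq(n)$ and recorded just before this theorem, while noting that the trivial character contributes the huge positive value $\ga_{(n)}=2(n!-1)-D_n$, which is irrelevant to the smallest eigenvalues. Under this correspondence the smallest distance eigenvalue of $d_{\Ga_n}$ is $-2$ minus the \emph{largest} adjacency eigenvalue among $\la\neq(n)$, and the second smallest corresponds to the second largest such $\eta_\la$. First I would invoke \cite[Theorem 1.1]{DZ}: since $\eta_{(n-2,2)}$ is the second largest adjacency eigenvalue overall, it is the largest among $\la\neq(n)$, so $\ga_{(n-2,2)}=-2-\eta_{(n-2,2)}$ is the smallest distance eigenvalue, and the two displayed formulas for it are merely the explicit values of $\eta_{(n-2,2)}=\tfrac{n-1}{n-3}D_{n-2}$ recorded after \eqref{rec}.

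The substantive part is to show that $\eta_{(n-2,1^2)}$ is the second largest adjacency eigenvalue among $\la\neq(n)$, equivalently that $\eta_\la<\eta_{(n-2,1^2)}$ for every $\la\neq(n),(n-2,2)$, strictly and with a unique maximizer once $n\neq5$. I would split by the sign of $\eta_\la$, which by the alternating sign property \eqref{alt} equals $(-1)^{n-\la_1}$. When $n-\la_1$ is odd, $\eta_\la<0<\eta_{(n-2,1^2)}$ and nothing is needed; so I only have to bound the positive eigenvalues, those with $n-\la_1$ even, and for these it suffices to prove $|\eta_\la|<\eta_{(n-2,1^2)}$. The engine is the Ku--Wong monotonicity \cite[Theorem 1.6]{KW1}: for a fixed first part $\la_1=m$, the quantity $|\eta_\la|$ is maximized at the dominance-largest partition with that first part.

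I would then run the bound by cases on $m=\la_1$. The first parts $m=n$ and $m=n-2$ produce only the excluded $(n)$, $(n-2,2)$ and the target $(n-2,1^2)$, so there is nothing to check there. For $n/2\le m\le n-4$ with $n-m$ even, the dominance-largest shape is the two-row partition $(m,n-m)$, and Ku--Wong together with Lemma \ref{l2} (applied with $i=n-m$, where $4\le i\le n/2$) gives $|\eta_\la|\le|\eta_{(m,n-m)}|<\eta_{(n-2,1^2)}$. The remaining, and most delicate, range is $m<n/2$, where the Young diagram is wide and Ku--Wong no longer collapses it to two rows. Here I would combine Lemma \ref{l3} (tall hooks), Lemma \ref{l4} (tall near hooks), Lemma \ref{l5} (three-row shapes), and the remark following Lemma \ref{l2} that all of them are dominated by $|\eta_{(n-3,3)}|=-\eta_{(n-3,3)}<\eta_{(n-2,1^2)}$, extending the Renteln-recursion estimates of those lemmas to the genuinely wide partitions with four or more rows. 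I expect this wide-partition estimate to be the main obstacle: the fixed-first-part monotonicity cannot lower the number of rows, so one must instead control $|\eta_\la|$ directly through \eqref{rec} and the induction identities \eqref{ide}, exactly as in the proofs of Lemmas \ref{l2}--\ref{l5}, and keep it below $|\eta_{(n-3,3)}|$.

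Finally I would settle two bookkeeping points. The strict separation $\eta_{(n-2,1^2)}<\eta_{(n-2,2)}$ follows by comparing the explicit values $\eta_{(n-2,1^2)}=(-1)^n+nD_{n-3}$ and $\eta_{(n-2,2)}=\tfrac{n-1}{n-3}D_{n-2}$ through \eqref{ide}; it degenerates to equality precisely at $n=5$, where a direct tabulation of the adjacency eigenvalues of $\Ga_5$ gives $\eta_{(3,2)}=\eta_{(3,1^2)}=4$, which is exactly why the smallest and second smallest distance eigenvalues coincide there and must be stated separately. The multiplicities $\chi_{(n-2,2)}(1)^2$ and $\chi_{(n-2,1^2)}(1)^2$ are then immediate from Prop. \ref{ds} once the uniqueness of the maximizing partition is in hand for $n\neq5$; the small cases $n=4,5$ I would simply verify by the explicit spectra.
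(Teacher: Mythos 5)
Most of your reduction is sound, and in places it is cleaner than the paper: passing to the adjacency spectrum via $\ga_\la=-2-\eta_\la$, citing \cite{DZ} for the smallest eigenvalue (the paper itself notes this suffices), splitting by the sign property \eqref{alt} so that only partitions with $n-\la_1$ even need bounding, and collapsing every partition with $\la_1\geq n/2$ onto the two-row shape $(\la_1,n-\la_1)$ by Ku--Wong \cite{KW1} and then invoking Lemma \ref{l2} --- none of this appears in the paper's proof, which instead bounds $|\eta_\la|$ for all shapes directly. Your comparison of $\eta_{(n-2,2)}$ with $\eta_{(n-2,1^2)}$ via \eqref{ide} is also correct: the difference works out to $\tfrac{2}{n-3}\bigl(D_{n-3}+(-1)^n\bigr)$, which vanishes exactly at $n=5$; the paper instead checks small $n$ against the tables in \cite{KW} and estimates for $n\geq 9$.

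However, the case you yourself flag as ``the main obstacle'' --- partitions with $\la_1<n/2$ and four or more rows, neither hooks nor near hooks --- is left as a plan rather than a proof, and it is a genuinely nonempty infinite family (e.g.\ $(4,4,2,2)\vdash 12$), so the argument for the second-smallest eigenvalue is incomplete precisely there. The idea you are missing is that no extension of the delicate inductions of Lemmas \ref{l2}--\ref{l5} is needed: one application of Renteln's recurrence \eqref{rec} together with the crude spectral-radius bound $|\eta_\mu|\leq D_{|\mu|}$ (valid for every $\mu$ since $\Ga_{|\mu|}$ is $D_{|\mu|}$-regular) kills this whole case at once. Indeed, any $\la$ that is neither a hook ($h=n$) nor a near hook ($h=n-1$) has principal hook length $h\leq n-2$, and if $\ell:=\ell(\la)\geq3$ then
\[
|\eta_\la|\;\leq\;|\eta_{\la-h}|+h\,|\eta_{\la-1}|\;\leq\;D_{n-h}+h\,D_{n-\ell}\;\leq\;(h+1)D_{n-\ell}\;\leq\;(n-1)D_{n-3}\;<\;nD_{n-3}+(-1)^n\;=\;\eta_{(n-2,1^2)},
\]
where the last step uses $D_{n-3}+(-1)^n>0$ (true for all $n\geq4$ except $n=5$, where no such $\la$ exists), hence $\ga_\la>\ga_{(n-2,1^2)}$ uniformly over all such shapes. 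This is exactly the paper's closing display; note that since it covers every non-hook, non-near-hook shape of length at least three, the paper never needs the sign property, Ku--Wong, or Lemma \ref{l5} for this theorem. Without this step, or an executed substitute for it, your proposal does not establish that $\ga_{(n-2,1^2)}$ is the second smallest eigenvalue.
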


\begin{proof}
First we claim $\ga_{(n-2,2)}\leq\ga_{(n-2,1^2)}$ with equality only when $n=5$. One can confirm it for the cases when $n$ is from 4 to 13 in the list in \cite[Section 11]{KW}. Now we have
\[\begin{split}
\ga_{(n-2,2)}&-\ga_{(n-2,1^2)}=n(D_{n-3}+(-1)^n)-(n-1)(n-2)D_{n-4}\\
&\leq n\left(\dfrac{(n-3)!}{e}+\dfrac{1}{n-2}+(-1)^n\right)
-(n-1)(n-2)\left(\dfrac{(n-4)!}{e}-\dfrac{1}{n-3}\right)\\
&\leq-\dfrac{2(n-4)!}{e}+\dfrac{n(n-1)}{n-2}+\dfrac{(n-1)(n-2)}{n-3}
\leq-\dfrac{2(n-4)!}{e}+2n+3,
\end{split}\]
where the first inequality is due to the bound \eqref{der1}. In particular,
it implies that $\ga_{(n-2,2)}-\ga_{(n-2,1^2)}<0$ when $n\geq9$. In summary, we verify the claim.

Now by Lemma \ref{l3} and Lemma \ref{l4}, we only need to prove that $\ga_{(n-2,1^2)}<\ga_\la$ for any partition $\la$ neither a hook nor a near hook. First for $\la=(n-i,i)\,(3\leq i\leq n/2)$, we have
$|\eta_{(n-i,i)}|<\eta_{(n-2,1^2)}$
by Lemma \ref{l2}. Hence, $\ga_{(n-2,1^2)}<\ga_{(n-i,i)}$ for $3\leq i\leq n/2$.
For those $\la$ of the length $\ell:=\ell(\la)\geq3$, we have $h\leq n-2$ and
\[\begin{split}
\ga_\la&=-2-\eta_\la=-2-(-1)^h(\eta_{\la-h}+(-1)^{\la_1}h\eta_{\la-1})\\
&\geq-2-|\eta_{\la-h}|-h|\eta_{\la-1}|\geq-2-D_{n-h}-hD_{n-\ell}\\
&>-2-(h+1)D_{n-\ell}\geq-2-(n-1)D_{n-\ell}\\
&>-2-nD_{n-3}-(-1)^n=\ga_{(n-2,1^2)}\qedhere
\end{split}\]
\end{proof}
We should remark that $\eta_{(n-2,2)}$ has been proved to be the second largest adjacency eigenvalue in \cite[Theorem 1.1]{DZ}, thus the result for $\ga_{(n-2,2)}$. Here we further compare $\ga_{(n-2,2)}$ with $\ga_{(n-1,1^2)}$. On the other hand, we have

\begin{theorem}
For $d_{\Ga_n}\,(n\geq4)$, the largest eigenvalue is \[\ga_{(n)}=2(n!-1)-D_n\]
with multiplicity one, the second largest eigenvalue is \[\ga_{(n-1,1)}=-2+\dfrac{D_n}{n-1}\]
with multiplicity $\chi_{(n-1,1)}(1)^2$, and the third largest eigenvalue is
\[\ga_{(n-3,3)}=-2+(-1)^{n-1}+\dfrac{(n-2)(n-3)}{n-5}D_{n-4}\]
with multiplicity $\chi_{(n-3,3)}(1)^2$ when $n\geq6$ and $\ga_{(2^2,1)}$ when $n=5$.
\end{theorem}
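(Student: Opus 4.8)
The plan is to work throughout with the adjacency eigenvalues $\eta_\la$, using the relations $\ga_{(n)}=2(n!-1)-\eta_{(n)}=2(n!-1)-D_n$ and $\ga_\la=-2-\eta_\la$ for $\la\neq(n)$ recorded before Prop.~\ref{ds}; thus the three largest distance eigenvalues are detected by the trivial character and by the two smallest adjacency eigenvalues. I would first dispose of the two outer cases. Since $|\eta_\la|\leq D_n$ for every $\la$ while $D_n<n!$, the value $\ga_{(n)}=2n!-2-D_n$ exceeds every other $\ga_\la\leq-2+D_n$, so it is the largest, of multiplicity $\chi_{(n)}(1)^2=1$. For the second largest I would invoke Renteln's theorem, which identifies $\eta_{(n-1,1)}=-D_n/(n-1)$ as the smallest adjacency eigenvalue; hence $\ga_{(n-1,1)}=-2+D_n/(n-1)$ is the largest of the values $-2-\eta_\la$ over nontrivial $\la$, with multiplicity $\chi_{(n-1,1)}(1)^2$.

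The substance of the theorem is the third largest eigenvalue, which amounts to showing that $\eta_{(n-3,3)}$ is the \emph{second} smallest adjacency eigenvalue, i.e. that $\eta_\la>\eta_{(n-3,3)}$ for every $\la\vdash n$ with $\la\neq(n),(n-1,1),(n-3,3)$. By the alternating sign property \eqref{alt} one has $\eta_{(n-3,3)}<0$, so any $\la$ with $\eta_\la\geq0$ is immediate; this settles in particular $(n-2,2)$ and $(n-2,1^2)$. For the remaining $\la$, with $\eta_\la<0$, it is enough to prove the strict inequality $|\eta_\la|<|\eta_{(n-3,3)}|=-\eta_{(n-3,3)}$, which rearranges to $\eta_\la=-|\eta_\la|>\eta_{(n-3,3)}$ as desired.

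I would then split this strict bound according to the shape of $\la$, feeding each family into the lemma tailored to it: two-row partitions $(n-i,i)$ with $4\leq i\leq n/2$ by the inequality in Lemma~\ref{l2} (in the strengthened form of the remark after it); hooks $(n-i,1^i)$ with $3\leq i\leq n-1$ by Lemma~\ref{l3}; near hooks $(n-2-i,2,1^i)$ with $1\leq i\leq n-4$ by Lemma~\ref{l4}; and the three-row partitions $(n-i-1,i,1)$ and $(n-i-j,i,j)$ that are not near hooks by Lemma~\ref{l5}. The only shapes left uncovered are those with $\ell(\la)\geq4$, $\la_2\geq2$, and $\la$ not a near hook. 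For these I would use Renteln's recurrence \eqref{rec} together with the fact that $\Ga_m$ is $D_m$-regular, so $|\eta_\mu|\leq D_m$ for $\mu\vdash m$; this yields $|\eta_\la|\leq|\eta_{\la-h}|+h|\eta_{\la-1}|\leq D_{n-h}+hD_{n-\ell}$, and the excluded shapes force $\ell\geq4$ and the principal hook $h\leq n-2$, whence $|\eta_\la|\leq(n-1)D_{n-4}$. Since $|\eta_{(n-3,3)}|=\tfrac{(n-2)(n-3)}{n-5}D_{n-4}-(-1)^{n-2}>nD_{n-4}-1$, and this family is empty unless $n\geq7$ (so that $D_{n-4}\geq2$), the bound $(n-1)D_{n-4}<|\eta_{(n-3,3)}|$ follows and closes the last case. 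The small orders $n=4,5$, where $(n-3,3)$ is not a partition, I would check directly against the tabulated spectra in \cite{KW}; for $n=5$ this produces $\ga_{(2^2,1)}$ in place of $\ga_{(n-3,3)}$.

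The main obstacle is the uniform strict comparison $|\eta_\la|<|\eta_{(n-3,3)}|$. Unlike the smallest-eigenvalue case, where the benchmark $|\eta_{(n-2,1^2)}|$ is of order $nD_{n-3}$ and therefore roughly $n$ times a generic eigenvalue, here the benchmark is only of order $nD_{n-4}$; consequently the crude recurrence bound is too lossy for two- and three-row partitions and forces the explicit Lemmas~\ref{l2} and \ref{l5}, and it is barely tight at $\ell=4$. The delicate point is to keep the constants honest---especially the factor $\tfrac{(n-2)(n-3)}{n-5}>n$ that makes $|\eta_{(n-3,3)}|$ just large enough to beat $(n-1)D_{n-4}$. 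The Ku--Wong dominance bound would collapse each $\la_1$-class onto its two-row representative but delivers only $\leq$, not the strict inequalities we need, which is exactly why the hands-on estimates cannot be bypassed.
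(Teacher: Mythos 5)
Your proposal is correct and takes essentially the same route as the paper: regularity of $\Ga_n$ for $\ga_{(n)}$, Renteln's identification of $\eta_{(n-1,1)}$ as the smallest adjacency eigenvalue for $\ga_{(n-1,1)}$, and then Lemma~\ref{l2} (in the strengthened form of the remark), Lemmas~\ref{l3}--\ref{l5}, and the recurrence bound $|\eta_\la|\leq D_{n-h}+hD_{n-\ell}\leq(n-1)D_{n-4}$ for the remaining partitions of length at least four, exactly as in the paper's proof of the third largest eigenvalue. Your minor repackagings---framing the task as locating the second smallest adjacency eigenvalue, disposing of positive-sign $\eta_\la$ via \eqref{alt}, and invoking emptiness of the residual family at $n=6$ where the paper's inequality $(n-1)D_{n-4}<(-1)^{n-1}+\tfrac{(n-2)(n-3)}{n-5}D_{n-4}$ already works for $n\geq6$---are cosmetic.
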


\begin{proof}
It is well-known that the largest adjacency eigenvalue for a $d$-regular graph is $d$. In particular for $d_{\Ga_n}$, the largest distance eigenvalue is $\ga_{(n)}=2(n!-1)-D_n$ with multiplicity one by Prop. \ref{di}. Besides, as $\eta_{(n-1,1)}$ is the minimum eigenvalue of $A_{\Ga_n}$ by Theorem 7.1 in \cite{Re}, the second largest eigenvalue for $d_{\Ga_n}$ is $\ga_{(n-1,1)}=-2+\tfrac{D_n}{n-1}$ with multiplicity $\chi_{(n-1,1)}(1)^2$.

Now for the third largest distance eigenvalue, we first know that
\[-\eta_{(n-3,3)}=|\eta_{(n-3,3)}|>|\eta_{(n-i,i)}|,\,\eta_{(n-3,3)}<-|\eta_{(n-i,i)}|\leq\eta_{(n-i,i)},
\,4\leq i\leq n/2\]
by Lemma \ref{l2}. Therefore, $\ga_{(n-3,3)}>\ga_{(n-i,i)}$ for $4\leq i\leq n/2$.
By Lemma \ref{l3}, Lemma\ref{l4},
\[\ga_{(n-3,3)}>\ga_{(n-i,1^i)},\ga_{(n-2-j,2,1^j)}\mb{ for }3\leq i\leq n-1,\,1\leq j\leq n-4,\,n\geq6.\]
By Lemma \ref{l5}, we also have $\ga_{(n-3,3)}> \ga_\la$ for any partition $\la$ of length three.
Thus it still needs to consider those $\la$ of length $\ell:=\ell(\la)\geq4$ and neither a hook nor a near hook. In this case, we have $h\leq n-2$ and
\[\begin{split}
\ga_\la&=-2-\eta_\la=-2-(-1)^h(\eta_{\la-h}+(-1)^{\la_1}h\eta_{\la-1})\\
&\leq-2+|\eta_{\la-h}|+h|\eta_{\la-1}|\leq-2+D_{n-h}+hD_{n-\ell}\\
&<-2+(h+1)D_{n-\ell}\leq-2+(n-1)D_{n-4}\\
&<-2+(-1)^{n-1}+\dfrac{(n-2)(n-3)}{n-5}D_{n-4}=\ga_{(n-3,3)}
\end{split}\]
In conclusion, we determine the third largest distance eigenvalue of $\Ga_n$.
\end{proof}

\begin{exam}
For the simplest example $n=4$, the distance polynomial of $\Ga_4$ is
\[P_{\Ga_4}(q)=(q-37)(q-1)^{10}(q+3)^9(q+5)^4,\]
where $\ga_{(4)}=37,\,\ga_{(3,1)}=\ga_{(1^4)}=1,\,\ga_{(2,1^2)}=-3,\,
\ga_{(2,2)}=-5$.

The distance polynomial of $\Ga_5$ is
\[P_{\Ga_5}(q)=(q-194)(q-9)^{16}(q-2)^{25}(q+1)^{16}(q+6)^{62}.\]
where $\ga_{(5)}=194,\,\ga_{(4,1)}=9,\,\ga_{(2^2,1)}=2,\,
\ga_{(2,1^3)}=-1,\,\ga_{(3,2)}=\ga_{(3,1^2)}=\ga_{(1^5)}=-6$.

The distance polynomial of $\Ga_6$ is
\[P_{\Ga_6}(q)=(q-1173)(q-51)^{25}(q-9)^{25}(q-3)^{357}
(q+3)^{25}(q+7)^{81}(q+9)^{25}(q+15)^{100}(q+17)^{81}.\]
where $\ga_{(6)}=1173,\,\ga_{(5,1)}=51,\,\ga_{(3^2)}=9,\,
\ga_{(3,2,1)}=\ga_{(3,1^3)}=\ga_{(1^6)}=3,\,
\ga_{(2,1^4)}=-1,\,\ga_{(2^2,1^2)}=-7,\,
\ga_{(2^3)}=-9,\,\ga_{(4,1^2)}=-15,\,\ga_{(4,2)}=-17$.
\end{exam}

\centerline{\bf Acknowledgments}
The first author was supported by NSFC (Grant No. 11501214) and the second author was supported by NSFC (Grant No. 11401211).

\bigskip
\bibliographystyle{amsalpha}

\begin{thebibliography}{9999}
\medskip

\bibitem{CK} P.J. Cameron, C.Y. Ku, \textit{Intersecting families of permutations}, European J. Combin. 24 (2003), 881--890.

\bibitem{CL} W.Y.C. Chen, J.D. Louck, \textit{The Factorial Schur Function}, J. Math. Phys. 34
(1993), 4144--4160.

\bibitem{DZ1} Y.-P. Deng, X.-D. Zhang, \textit{Automorphism group of the derangement graph}, Electron. J. Combin. 18 (2011), \#P198

\bibitem{DZ} Y.-P. Deng, X.-D. Zhang, \textit{A note on eigenvalues of the derangement graph}, Ars
Combin. 101 (2011), 289--299.

\bibitem{KW} C.Y. Ku, D.B. Wales, \textit{Eigenvalues of the derangement graph}, J. Combin. Theory Ser. A 117 (2010), 289--312.

\bibitem{KW1} C.Y. Ku, K.B. Wong, \textit{Solving the Ku--Wales conjecture on the eigenvalues of the derangement graph}, European J. Combin. 34 (2013), 941--956.

\bibitem{LM} B. Larose, C. Malvenuto, \textit{Stable sets of maximal size in Kneser-type graphs}, European J. Combin. 25 (2004), 657--673.

\bibitem{MST} K. Meagher, P. Spiga, T.H. Tiep, \textit{An Erd\H{o}s-Ko-Rado theorem for finite 2-transitive groups}, European J. Combin. 55 (2016), 100--118.

\bibitem{OO} A. Okounkov, G. Olshanski, \textit{Shifted Schur Functions}, Algebra i Analiz. 9 (1977),
73--146 (Russian). English translation: St. Petersburg Math. J. 9 (1998), 239--300.

\bibitem{Re} P. Renteln, \textit{On the spectrum of the derangement graph}, Electron. J. Combin. 14 (2007), \#R82.


\end{thebibliography}

\clearpage

\end{document}